\definecolor{aleacolour}{rgb}{0.09,0.32,0.44} 
\newtheorem*{theorem*}{Theorem}
\newtheorem{lemma}{Lemma}
\theoremstyle{remark}
\newtheorem*{corollary*}{Corollary}
\newtheorem*{problem*}{Problem}
\newcommand{\bigA}{\mathcal{A}}
\newcommand{\N}{\mathbb{N}}
\newcommand{\PP}{\mathbb{P}}
\newcommand{\E}{\mathbb{E}}
\DeclareMathOperator{\Med}{Med}
\DeclareMathOperator{\cov}{cov}
\title{Counting bats}
\author{Itai Benjamini \and Gady Kozma}
\date{}
\begin{document}

\maketitle

 


Assume $G$ is an infinite graph (``the cave'') which is recurrent for
the simple random walk (SRW). Several independent walkers (``the bats'') are
performing SRW on $G$ simultaneously with the same clock with starting
vertex $o$. $G$ is not known to you (hence the cave metaphor), it is too dark to see $G$). The only information given to you
is the set of return times to $o$, though you do not know how many
walkers returned at any given time, only if this number is 0 or
positive. Can you almost surely  tell how many walkers
are there, by only observing the times $o$ is occupied?
 
\begin{theorem*}
Almost surely it is possible to tell how many walkers are there, by observing the times $o$ is occupied.
\end{theorem*}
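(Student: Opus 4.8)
The plan is to study the renewal structure of the return times and to extract $k$ from the \emph{coarse} geometry of the occupied set.

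\emph{Setup and the source of the difficulty.} Write $R_i\subseteq\N$ for the return times to $o$ of walker $i$, so that the observed set is $U=R_1\cup\cdots\cup R_k$, and let $\mu$ be the law of the first return time of SRW on $G$. By the strong Markov property each $R_i$ is a renewal process with i.i.d.\ gaps distributed as $\mu$, the $R_i$ are i.i.d., and $\mu$ has infinite mean: an infinite connected graph carries no stationary probability (the reversible measure $\deg$ has infinite mass), so SRW is null recurrent. Hence the problem is equivalent to: from a single sample of the union of $k$ i.i.d.\ copies of a renewal process with an \emph{unknown} infinite-mean gap law, recover $k$ almost surely. The delicate point is that $k$ is invisible in the \emph{local} statistics of $U$: the pairwise intersections $R_i\cap R_j$ have density zero in $U$ --- indeed $\sum_t\PP_o(X_t=o)^2<\infty$ for some recurrent $G$, such as $\Z^2$ --- so around a typical point of $U$ one sees exactly the return pattern of a single walker. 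Thus $k$ must be read off from the large-scale geometry of $U$, and in a manner insensitive to $\mu$.

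\emph{The detection scheme.} For $k=1$ the set $U=R_1$ is regenerative: its gaps are i.i.d. For $k\ge2$ it is not --- after a gap of $U$ the walkers that just returned restart afresh, but the others carry over a residual lifetime correlated with the recent past (a short gap of $U$ betrays that some walker was near $o$ and so is likely to return soon). I would first turn this into an almost-sure test using \emph{distribution-free} consequences of the i.i.d.\ property of a gap sequence --- for instance record statistics, for which ``number of records $\sim\log n$'' and ``only finitely many double records'' hold for every continuous gap law but should fail, quantifiably, once consecutive gaps are positively dependent; this separates $k=1$ from $k\ge2$. For the full count I would lift the idea to the whole rescaled shape of $U$: fix an exponentially growing deterministic sequence $T_0\ll T_1\ll\cdots$ (say $T_{j+1}=2^{T_j}$), let $\Xi_j=\frac1{T_{j+1}-T_j}\bigl(U\cap[T_j,T_{j+1}]-T_j\bigr)\subseteq[0,1]$, and argue that (i) the $\Xi_j$ are asymptotically independent --- conditionally on $U\cap[0,T_j]$ the walkers sit somewhere at time $T_j$, but after $T_{j+1}-T_j\gg T_j$ further steps the rescaled law of their return times inside $[T_j,T_{j+1}]$ barely remembers those positions --- while (ii) along any subsequence on which the rescaled tail of $\mu$ converges, $\Xi_j$ converges in law to the union of $k$ i.i.d.\ self-similar regenerative sets on $[0,1]$. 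A strong law for the weakly dependent sequence $(\Xi_j)$ then recovers this family of limit laws from the single realization, and $k$ appears as their common number of superposed components --- recovered, say, from the joint law of the few largest holes of $U$ near a scale, which is the intersection of the largest holes of the $k$ renewal processes and so ``counts'' them. Recurrence is what makes the blocks $[T_j,T_{j+1}]$ nondegenerate and the residual-lifetime correlations that reveal $k\ge2$ actually present.

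\emph{The main obstacle.} The crux is that $\mu$ is unconstrained among infinite-mean laws, so $\PP_o(\tau_o>t)$ may oscillate and there may be no scaling limit at all, only subsequential ones with different exponents and even different qualitative shapes (slowly varying tails give a ``single dominant gap'' picture, as on $\Z^2$); the argument must therefore isolate $k$ as an invariant shared by \emph{every} such subsequential limit. A second, more technical difficulty is the asymptotic independence of the snapshots $\Xi_j$: this is a mixing statement for SRW on an unknown recurrent graph --- after $\gg T_j$ steps a walker forgets its time-$T_j$ position at the resolution of rescaled return-time statistics --- to be proved by coupling with no structural hypothesis on $G$. The remaining point, injectivity of $k\mapsto$(limit law of the $k$-fold superposition), is where the hole process seems the cleanest tool.
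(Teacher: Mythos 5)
Your renewal-process abstraction is the right starting point and you have correctly located the two hard points (no scaling limit need exist; the blocks must be decorrelated), but the proposal is a programme rather than a proof, and both of its pillars have genuine gaps. First, the block-independence claim (i) is not a mixing property of the walk obtainable by coupling: conditionally on the past, a walker whose last return before $T_j$ was at time $T_j-a$ has residual return time distributed as $\mu-a$ conditioned on $\mu>a$, and since $\mu$ is an arbitrary infinite-mean law, the conditional probability of (say) an empty window $[T_j,T_{j+1}]$ is $\PP(\mu>a+T_{j+1}-T_j)/\PP(\mu>a)$, which varies by a constant factor with the age $a$ no matter how fast $T_j$ grows. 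So the snapshots $\Xi_j$ are genuinely dependent, even for $k=1$. The paper avoids this by never cutting in time: it indexes by the renewal count, where regeneration is exact, controls the overshoot explicitly (the event $S_{mY_m+1}>M_{m\lambda^2}$ in Lemma \ref{lem:median}), and replaces independence by a covariance bound plus a second-moment argument. Second, your extraction of $k$ from subsequential limits presupposes that one can recognize ``$k$ superposed copies'' without knowing the base law; but $\mu$ is unknown, the subsequential rescaled limits need not be self-similar or even nondegenerate, and the injectivity of $(k,\text{limit law of one copy})\mapsto\text{law of the union}$ across all possible limits is precisely the unproved hard step. The paper instead first \emph{reconstructs} the single-walker law (Lemma \ref{lem:one}) by sampling immediately after long empty stretches, so that with summable error the next window is governed by one identified walker; that is where the quantitative non-concentration estimate of Gurel-Gurevich and Nachmias (hence reversibility) enters, and nothing in your plan replaces it. Your record-statistics test for $k=1$ versus $k\ge2$ also needs repair: the gaps are integer-valued, so the distribution-free record asymptotics for continuous laws do not apply verbatim.

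A telling sanity check: your argument nowhere uses reversibility or any heat-kernel bound, so if completed it would resolve, affirmatively, the paper's open problem about general recurrent Markov chains. That is not impossible, but it strongly suggests that the missing steps --- decorrelation of blocks for arbitrary $\mu$, and identifiability of $k$ without knowledge of $\mu$ --- are where the real content lies; the paper's three lemmas are devoted almost entirely to exactly those two points (plus the double-return correction of Lemma \ref{lem:picoez}, which you correctly flag as a density-zero effect but which still requires the $C/\sqrt{s}$ return-probability bound to prove in general).
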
 

Formally, there is a function $\bigA:\{0,1\}^\N\to\N$ which,
given the visits of the walkers outputs their number, and is correct
with probability 1. Again, $\bigA$ does not depend on the graph. For more bat-related results, see our paper \cite{BKLRT}.

\begin{corollary*}
There is no pair of recurrent  infinite graphs
so that the return times of two independent SRW's on one of the graphs are absolutely continuous
to the return times of one SRW on the other graph.
\end{corollary*}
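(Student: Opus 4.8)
The plan is to deduce the Corollary directly from the Theorem; the substance is essentially a one-line logical observation, and all the work is done by the fact that the decoder $\bigA$ is a \emph{single} function, the same for every graph. First I would fix notation: for a recurrent infinite graph $G$ with basepoint $o$ and an integer $n\ge 1$, write $\mu^G_n$ for the probability measure on $\{0,1\}^\N$ that is the law of the occupation sequence $\bigl(\mathbf{1}[\,o\text{ occupied at time }t\,]\bigr)_{t\in\N}$ generated by $n$ independent simple random walks started at $o$. In this language the Theorem says precisely that $\bigA$ is measurable and that $\mu^G_n\bigl(\bigA^{-1}(n)\bigr)=1$ for every such $G$ and every $n$.

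Next I would argue by contradiction. Suppose $G_1,G_2$ are recurrent infinite graphs with $\mu^{G_1}_2\ll\mu^{G_2}_1$, and consider the \emph{fixed} measurable set $E:=\bigA^{-1}(\{2\})\subseteq\{0,1\}^\N$. Applying the Theorem to $(G_1,2)$ gives $\mu^{G_1}_2(E)=1$; applying it to $(G_2,1)$ gives $\mu^{G_2}_1\bigl(\bigA^{-1}(1)\bigr)=1$, hence $\mu^{G_2}_1(E)=0$ since $1\neq 2$. Absolute continuity then forces $\mu^{G_1}_2(E)=0$, contradicting $\mu^{G_1}_2(E)=1$. This already gives the Corollary.

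The points where I would be careful — rather than genuine obstacles — are two. First, one must test the \emph{same} event $E$ against both measures, which is legitimate exactly because $\bigA$ does not depend on the graph; it is this graph-independence of the Theorem, not merely its correctness, that is being used. Second, $\bigA$ must be measurable for $\mu^G_n\bigl(\bigA^{-1}(n)\bigr)$ to be well-defined, which is part of what the Theorem supplies (and if $\bigA$ were only universally measurable rather than Borel, the argument is unaffected, since absolute continuity passes to universal completions). I would also note two minor strengthenings that fall out for free: the argument never uses $G_1\neq G_2$, so it shows that even on a single recurrent graph the two-walker occupation law is not absolutely continuous with respect to the one-walker law; and since mutual absolute continuity implies one-sided absolute continuity, the "equivalent laws" version is ruled out a fortiori as well.
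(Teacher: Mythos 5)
Your argument is correct and is precisely the (implicit) deduction the paper intends: the corollary is stated as an immediate consequence of the Theorem, and the point you isolate --- that the \emph{same} event $\bigA^{-1}(\{2\})$ has full measure under the two-walker law and measure zero under the one-walker law, which absolute continuity forbids --- is exactly the reason it follows. Nothing to add.
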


\begin{problem*}The algorithm in the proof seems far from efficient.
Give lower and upper bounds and suggest improved or optimal algorithms.
\end{problem*}

(``the algorithm'' here is simply the function $\bigA$, which is not really an algorithm in the computer sense: it does not ``run'' or ``stop''. Nevertheless one can find reasonable algorithmic versions of the problem and investigate them)
 
\begin{problem*}We do not know if reversibility is important (it is definitely used in our proof). So we ask: is there an algorithm that gives the right number of walkers for any recurrent Markov chain, without knowledge of the Markov chain?
\end{problem*}

\subsection*{Proof}
The first step is to reconstruct the
distribution of returns of a \emph{single} walker, no matter how many
walkers one actually examines.
\begin{lemma}\label{lem:one}There is an algorithm to reconstruct
\[
p(n)=\PP(\textrm{a single walker returns to $o$ at time $n$})
\]
with no knowledge of the graph structure.
\end{lemma}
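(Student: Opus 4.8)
\emph{Proof idea.} Write $X_n\in\{0,1\}$ for the observed indicator that $o$ is occupied at time $n$, let $X^{(j)}_n\in\{0,1\}$ be the hidden indicator that walker $j$ sits at $o$ at time $n$, so that $X_n=\max_{1\le j\le k}X^{(j)}_n$, and put $\mathcal O=\{n:X_n=1\}$. The plan is to show that $p(0)=1$ trivially, and that almost surely
\[
p(m)=\lim_{N\to\infty}\frac{\sum_{n=0}^{N}X_nX_{n+m}}{\sum_{n=0}^{N}X_n}\qquad(m\ge1),
\]
the right-hand side being a function of the observed sequence alone; this limit is the reconstruction algorithm. The heuristic is that a late occupied time $n$ is, with overwhelming conditional probability, occupied by one walker and by no other (returns thin out in time on a recurrent graph), and that walker revisits $o$ at time $n+m$ with probability exactly $p(m)$ by the Markov property; so $\PP(n+m\in\mathcal O\mid n\in\mathcal O)\to p(m)$, and the displayed empirical ratio should realise this conditional probability. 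The real work is to make ``empirical ratio $\approx$ conditional probability'' rigorous from a \emph{single} sample, the more so because $\sum_{n\le N}X_n$ is not concentrated: for a null-recurrent walk it is essentially a local time.

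\emph{One walker.} For $k=1$ the successive returns $0=T_0<T_1<\cdots$ have i.i.d.\ increments $\tau_l=T_l-T_{l-1}$ with the first-return law $f$; recurrence gives $T_l<\infty$ for all $l$, hence $R_N:=\#\{l:T_l\le N\}=\sum_{n\le N}X_n\to\infty$. One checks that $\sum_{n\le N}X_nX_{n+m}=\sum_{l=0}^{R_N-1}\xi_l$, where $\xi_l=\mathbf{1}[\,\exists\,r\ge1:\ \tau_{l+1}+\cdots+\tau_{l+r}=m\,]$ records whether a return of the restarted walk falls exactly $m$ steps after $T_l$. The sequence $(\xi_l)$ is an $m$-dependent --- hence stationary and ergodic --- function of the i.i.d.\ gaps, with $\E\xi_0=\sum_{r\ge1}f^{*r}(m)=p(m)$. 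Since $R_N\to\infty$, Birkhoff's theorem gives $R_N^{-1}\sum_{l<R_N}\xi_l\to p(m)$ a.s., which is the claim for $k=1$.

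\emph{More walkers.} For general $k$ I would compare the observed quantities $D_N=\sum_{n\le N}X_n$ and $C_N=\sum_{n\le N}X_nX_{n+m}$ with their ``with multiplicity'' counterparts $\widehat D_N=\sum_{j}R^{(j)}_N$ and $\widehat C_N=\sum_{j}\sum_{n\le N}X^{(j)}_nX^{(j)}_{n+m}$, where $R^{(j)}_N=\sum_{n\le N}X^{(j)}_n$. A direct inclusion--exclusion bounds both $\widehat D_N-D_N\ge0$ and $|\widehat C_N-C_N|$ by finitely many cross terms $\sum_{n\le N}X^{(i)}_nX^{(j)}_n$ and $\sum_{n\le N}X^{(i)}_nX^{(j)}_{n+m}$ with $i\ne j$, and the key estimate is that each cross term is $o(R^{(i)}_N)=o(D_N)$ almost surely. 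This is where reversibility enters: sampling walker $j$'s position at the successive return times of walker $i$ produces a Markov chain $\widetilde Z_l$ on the (locally finite) vertex set, with transition kernel $\sum_{t\ge1}f(t)\,p_t(\cdot,\cdot)$ and $\widetilde Z_0=o$, which is reversible with respect to the measure $\deg(\cdot)$ of infinite total mass; hence on its irreducibility class it is transient or null recurrent, and therefore spends only $o(R)$ of its first $R$ steps in any fixed finite set. Now $X^{(i)}_nX^{(j)}_n=1$ forces $\widetilde Z_l=o$ at the relevant index $l$, while $X^{(i)}_nX^{(j)}_{n+m}=1$ forces $\widetilde Z_l$ into the finite ball of radius $m$ about $o$; so both cross terms are at most the number of visits of $\widetilde Z$ to that ball before time $R^{(i)}_N$, which is $o(R^{(i)}_N)$. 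Therefore $\widehat D_N/D_N\to1$ and $C_N=\widehat C_N+o(D_N)$. Finally, the single-walker case gives $\big(\sum_{n\le N}X^{(j)}_nX^{(j)}_{n+m}\big)/R^{(j)}_N\to p(m)$ with $R^{(j)}_N\to\infty$ for each $j$, and averaging finitely many convergent ratios against the non-negative diverging weights $R^{(j)}_N$ yields $\widehat C_N/\widehat D_N\to p(m)$. Combining,
\[
\frac{C_N}{D_N}=\frac{\widehat C_N}{\widehat D_N}\cdot\frac{\widehat D_N}{D_N}+\frac{C_N-\widehat C_N}{D_N}\longrightarrow p(m)\cdot1+0=p(m).
\]

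The main obstacles are (i) upgrading averages and expectations to almost-sure statements from one realisation --- met by the ergodic theorem in the one-walker step and by the transient/null-recurrent dichotomy for the subordinated chains $\widetilde Z$ in the multi-walker step; and (ii) the bookkeeping verifying that these subordinated chains really are reversible with respect to an infinite measure carried by an infinite irreducibility class, so that positive recurrence, which would wreck the $o(R)$ bound, is genuinely excluded.
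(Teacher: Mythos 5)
Your estimator and your route to it are genuinely different from the paper's, and I believe the argument is correct. The paper does not use the ratio $\sum_n X_nX_{n+m}/\sum_n X_n$ over all times; it samples sparsely, waiting for a visit-free interval of length $2^i$ and testing for a return exactly $m$ steps after the first return ending that interval. The point there is that any \emph{other} walker has by then been absent for at least $2^i$ steps, so by the Gurel-Gurevich--Nachmias non-concentration bound of \cite{GN} ($\PP(\textrm{first return at }t\mid\textrm{no return before }t)\le C(\log t)/t$) it produces a spurious return with probability $O(mi2^{-i})$; Borel--Cantelli kills all but finitely many bad events and the empirical frequency of the remaining (``good'') events converges to $p(m)$. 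You replace this quantitative external input by two soft ingredients: Birkhoff's theorem applied to the $m$-dependent factor $\xi_l$ of the i.i.d.\ return gaps (the one-walker case), and the observation that sampling walker $j$ at the return times of walker $i$ yields a $\deg$-reversible Markov chain, hence --- once its communicating class is known to be infinite --- a chain that is not positive recurrent and so visits the finite ball $B(o,m)$ with vanishing density, which is exactly the $o(D_N)$ bound you need on the cross terms. The two points you flag as requiring bookkeeping do go through: the kernel $\sum_t f(t)p_t$ has $f(2)>0$, so the class of $o$ contains every vertex reachable from $o$ by an even-length walk, and in an infinite connected locally finite graph that set is infinite, whence the reversing measure on it has infinite mass (note that local finiteness is genuinely needed here --- on something like an infinite star the class would be $\{o\}$ and the statement itself fails --- but it is already implicit in the paper, e.g.\ in the $C/\sqrt{s}$ bound of Lemma \ref{lem:picoez}). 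As for what each approach buys: the paper's is shorter given \cite{GN} but discards almost all of the observed data and leans on a nontrivial theorem; yours is self-contained, uses the full observation sequence (relevant to the paper's efficiency problem), and makes the role of reversibility completely transparent --- in your formulation the danger for a general recurrent Markov chain is precisely that the subordinated chain could be positive recurrent, which is a concrete way to attack the paper's second open problem.
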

\begin{proof}
We fix some large $T_1$ and wait till you see a time interval of length
$\ge T_1$ with no visits to $o$ and look at the first return after this
long returns-free interval, denoted by $s_1$. Let $E_1$ be the event
that there is a return at $s_1+n$. Continue similarly: choose some
large $T_2$, let $s_2$ be the first time after $s_1+n$ when a
returns-free intervals longer than $T_2$ finished, and let $E_2$ be
the event that there is a return at $s_2+n$. Etc. For concreteness,
fix $T_i=2^i$.

Write now $E_i=G_i\cup B_i$ where $G_i$ (the ``good'' event) is the
event that the walker which returned at time $s_i$ also returned at
$s_i+n$, and $B_i$ (the ``bad'' event) is the event that another
walker returned at time $s_i+n$. We would have liked to sample $G_i$,
which are exactly i.i.d.\ variables with probability $p(n)$, but we
can only sample $E_i$. So we need to show that $B_i$ are rare.

The key observation follows from a quantitative non
concentration of return times established in  \cite{GN}: on any graph,
the probability that a walker returned for the first time at time $t$,
conditioned on not having returned before $t$,
is $\le (C\log t)/t$. Denote therefore $B_i(j)$ the event
that it is the $j^{\textrm{th}}$ walker that returned at time $s_i+n$
($B$ is a ``bad'' event, so we assume that the $j^\textrm{th}$ walker
did not return at time $s_n$ or that there were two returns at $s_n$).
Let $r$ be the last visit of the $j^{\textrm{th}}$ walker to $o$
before $s_i$. By definition, this means that $r<s_i-2^i$. We can now write
\begin{align*}
\PP(B_i(j)\,|\,s_i,r)&=\PP(\textrm{a walker returned at time
  $s_i-r+n$}\\
&\qquad\qquad|\,
  \textrm{not returning in the first $s_i-r$ steps})\\
&\le\sum_{j=0}^n\PP(\textrm{a walker returned at time
    $s_i-r+j$}\\
&\qquad\qquad|\,\textrm{not returning in the first $s_i-r+j-1$
    steps})\\
\textrm{by \cite{GN}}\qquad
&\le\sum_{j=1}^nC\frac{\log(s_i-r+j)}{s_i-r+j}\le \frac{Cni}{2^i}
\end{align*}
Integrating over $r$ and $s_i$ gives that $\PP(B_i(j))\le Cni2^{-i}$,
and summing over $j$ (which has $k$ possibilities, where $k$ is the
(unknown) number of walkers) gives $\PP(B_i)\le Ckni2^{-i}$. We see
that these numbers are summable, so only a finite number of $B_i$
occur. This means that $p(n)$ may be calculated by 
\[
p(n)=\lim_{\ell\to\infty}\frac{|\{i\le\ell:E_i\}|}{\ell}
\]
which is the algorithm sought for.
\end{proof}
With the distribution of returns estimated, we now have a relatively
easy task: we have a known variable, the number of returns of a single
walker. We are given a sample of a union of $k$ independent copies of it
and we need to estimate $k$. Taking the number of actual returns up to
time $t$ and dividing by the (known) expectation for a single walker,
would give a variable with expectation $k$. It would be natural to
assume that if we repeat this experiment with times $t_i$ growing
sufficiently fast, the resulting variables would be approximately
independent and hence it would be possible to calculate $k$ by the
limit of the running average. The only difficulty is to explain what
does ``sufficiently fast'' means, and it turns out that this must
depend on the graph. The following lemma essentially claims that this
scheme works if one takes $t_i$ to be the median of the
$(2^i)^{\textrm{th}}$ return of a single walker to $o$.

\begin{lemma}\label{lem:median}
Let $X_{n}$ be i.i.d.\ $\mathbb{N}$-valued random variables, and
let $S_{n}$ be the corresponding random walk
\[
S_{n}=X_{1}+\dotsc+X_{n}.
\]
Let $M_n$ be the median of
$S_n$ i.e.
\[
M_{n}=\Med(S_{n})\Longrightarrow n=\max\{i:\mathbb{P}(S_{i}\le M_{n})>\tfrac{1}{2}\}.
\]
Define 
\[
Y_{n}=\frac{1}{n}\max\{i:S_{i}\le M_{n}\}
\]
Then
\[
\frac 1N\sum_{n=1}^N\frac{Y_{2^n}}{\E Y_{2^n}}\to 1
\]
as $N\to\infty$.\end{lemma}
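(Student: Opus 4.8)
The plan is to read $Y_n$ as a rescaled renewal‑counting variable and deduce the statement from a weak law of large numbers for $Z_n:=Y_{2^n}/\E Y_{2^n}$, the whole weight of the argument resting on a geometric decay bound for $\cov(Z_m,Z_n)$. Throughout write $\nu(x):=\max\{i:S_i\le x\}$ (equal to $\#\{i\ge1:S_i\le x\}$ since the $X_i\ge1$ make $S$ strictly increasing) and $U(x):=\E\nu(x)=\sum_{i\ge1}\PP(S_i\le x)$, so $Y_n=\nu(M_n)/n$ and $U(M_n)\le M_n<\infty$.

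First I would collect elementary estimates. We may assume $X$ nondegenerate, else $Y_n\equiv1$. With $q_n:=\PP(S_n\le M_n)=\PP(\nu(M_n)\ge n)$, the median definition gives $\tfrac12<q_n\le\tfrac12+\sup_a\PP(S_n=a)$, and the concentration function of an i.i.d.\ sum tends to $0$, so $q_n\to\tfrac12$; fix $m_0$ with $q_{2^m}\le\tfrac23$ for $m\ge m_0$. Splitting $S_{kn}$ into $k$ i.i.d.\ blocks of length $n$, each of which is $\le M_n$ on $\{S_{kn}\le M_n\}$, gives $\PP(Y_n\ge t)\le\PP(S_{\lfloor t\rfloor n}\le M_n)\le q_n^{\lfloor t\rfloor}$; hence for $n\ge2^{m_0}$ the $Y_{2^n}$ have a uniform sub‑exponential tail, $\E Y_{2^n}^p\le C_p$ for every $p$, and $\tfrac12<\E Y_{2^n}\le3$ (the lower bound since $\E Y_{2^n}\ge\PP(Y_{2^n}\ge1)=q_{2^n}$). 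Applying the same block decomposition to $\{S_{2^n}\le2M_{2^m}\}$, on which at most one block can exceed $M_{2^m}$, and using $q_{2^m}\le\tfrac23$, one gets $M_{2^n}\ge2M_{2^m}$ once $n-m$ exceeds an absolute constant $c_1$; iterating, $M_{2^n}/M_{2^m}$ grows geometrically in $n-m$. Finally $\max_{i\le2^n}X_i\le S_{2^n}$ gives $\PP(X>M_{2^n})\le\PP(S_{2^n}>M_{2^n})<\tfrac12$, and $1-(1-p)^{2^n}\ge1-e^{-2^np}$ forces $\PP(X>M_{2^n})\le(\log2)2^{-n}$; combined with the median growth this upgrades to $\PP(X>\tfrac12 M_{2^n})\le C2^{-n}$, and $U(M_{2^n})\le3\cdot2^n$.

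The heart is the covariance bound. Fix $m_0\le m<n$ with $n-m\ge c_1$, put $\Delta:=M_{2^n}-M_{2^m}$, let $K:=\nu(M_{2^m})$, let $A:=M_{2^m}-S_K\in[0,M_{2^m}]$ be the age of the renewal process at level $M_{2^m}$, and let $R:=\nu(M_{2^n})-\nu(M_{2^m})$, so $\nu(M_{2^n})=K+R$. Conditionally on $K=k$ the increments $X_{k+1},X_{k+2},\dots$ are independent of $(X_1,\dots,X_k)$ with $X_{k+1}\sim\mathcal L(X\mid X>A)$ and $X_{k+2},\dots$ i.i.d.\ $\mathcal L(X)$; hence $R$ depends on $(X_1,\dots,X_k)$ only through $A$, and $\E[R\mid K,A]=g(A)$ where $g(a):=\E\big[\mathbf1_{\{O_a\le\Delta\}}(1+U(\Delta-O_a))\big]$ and $O_a\sim\mathcal L(X-a\mid X>a)$ is the level‑$M_{2^m}$ overshoot. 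Since $K$ is $\sigma(K,A)$‑measurable, $\cov(\nu(M_{2^m}),R)=\cov(K,g(A))$, so by Cauchy--Schwarz
\[
\big|\cov(Y_{2^m},Y_{2^n})\big|\le\frac{\operatorname{Var}\nu(M_{2^m})+\sqrt{\operatorname{Var}\nu(M_{2^m})}\,\sqrt{\operatorname{Var}g(A)}}{2^{m+n}}.
\]
As $\operatorname{Var}\nu(M_{2^m})\le4^m\E Y_{2^m}^2\le C4^m$, everything reduces to $\operatorname{Var}g(A)\le C(4^m+2^{m+n})$, and this is the step I expect to be the real obstacle, since there is no soft reason for $g$ to be nearly constant — it is here that the renewal (reversibility) structure is genuinely used. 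The idea: subadditivity $U(x+y)\le U(x)+U(y)+1$ gives $|g(a)-(1+U(\Delta))|\le\E\min(U(O_a)+1,\,1+U(\Delta))$, whence by conditional Jensen $\operatorname{Var}g(A)\le\E\min(U(O_A)+1,\,1+U(\Delta))^2$; one then inserts the overshoot tail $\PP(O_A>y)=\PP(S_{K+1}>M_{2^m}+y)\le(1+U(M_{2^m}))\PP(X>y)\le C2^m\PP(X>y)$ and splits according to $O_A\le M_{2^m}$ (contributing $\le(U(M_{2^m})+1)^2\le C4^m$), $O_A>\tfrac12 M_{2^n}$ (contributing $\le(1+U(M_{2^n}))^2\PP(O_A>\tfrac12 M_{2^n})\le C4^n\cdot C2^{m-n}=C2^{m+n}$), and the middle range partitioned over the blocks $(M_{2^j},M_{2^{j+1}}]$, $m\le j<n$ (contributing $\le\sum_{j=m}^{n-1}(U(M_{2^{j+1}})+1)^2\PP(O_A>M_{2^j})\le C\sum_{j=m}^{n-1}4^j2^{m-j}\le C2^{m+n}$, using the per‑block estimates from the first step). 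This gives $\operatorname{Var}g(A)\le C(4^m+2^{m+n})$, hence $|\cov(Y_{2^m},Y_{2^n})|\le C2^{-(n-m)/2}$, and since $\tfrac12<\E Y_{2^n}\le3$ we get $|\cov(Z_m,Z_n)|\le C\,2^{-|m-n|/2}$ for $m,n\ge m_0$; the remaining pairs (with $\min(m,n)<m_0$ or $|m-n|\le c_1$) are bounded directly by Cauchy--Schwarz and contribute only $O(N)$ to $\sum_{m,n\le N}|\cov(Z_m,Z_n)|$.

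The rest is routine. With summable covariances, $\operatorname{Var}\big(\tfrac1N\sum_{n\le N}Z_n\big)=\tfrac1{N^2}\sum_{m,n\le N}\cov(Z_m,Z_n)=O(1/N)$, so $\tfrac1N\sum_{n=1}^N Z_n\to1$ in $L^2$. For almost sure convergence, Chebyshev plus Borel--Cantelli along the lacunary subsequence $N_j=\lceil(1+\eps)^j\rceil$ (for which $\sum_j N_j^{-1}<\infty$) give $\tfrac1{N_j}\sum_{n\le N_j}Z_n\to1$ a.s.; since $Z_n\ge0$, for $N\in(N_j,N_{j+1}]$ the average $\tfrac1N\sum_{n\le N}Z_n$ lies between $\tfrac{N_j}{N_{j+1}}\cdot\tfrac1{N_j}\sum_{n\le N_j}Z_n$ and $\tfrac{N_{j+1}}{N_j}\cdot\tfrac1{N_{j+1}}\sum_{n\le N_{j+1}}Z_n$, so a.s.\ $\liminf_N$ and $\limsup_N$ of the running average lie in $[(1+\eps)^{-1},1+\eps]$; letting $\eps\downarrow0$ completes the proof.
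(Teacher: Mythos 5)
Your proof is correct, and the heart of it --- the covariance estimate --- is obtained by a genuinely different route from the paper's. Both arguments share the outer shell: a uniform sub\nobreakdash-exponential tail for $Y_n$ from the block decomposition, a covariance bound that is summable in $|i-j|$, a second-moment/Chebyshev bound, Borel--Cantelli along a sparse subsequence, and interpolation to all $N$ using positivity/monotonicity. But the paper proves the covariance decay by introducing a bad event $\mathcal{B}$ (either $Y_m\ge\lambda$ or the walk overshoots $M_{m\lambda^2}$ at its first step past $M_m$, with $\lambda=\lfloor(n/m)^{1/3}\rfloor$), discarding it via the tail bound, and on its complement sandwiching $\E(Y_n\,|\,\omega)$ within $C/\lambda$ of $\E Y_n$ by two stochastic-domination arguments (the remaining count is dominated by a fresh copy of $nY_n$, the deficit by $m\lambda^2Y_{m\lambda^2}$), yielding $|\cov(Y_{2^i},Y_{2^j})|\le C2^{-|i-j|/3}|i-j|^3$. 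You instead use the exact renewal identity $\cov(\nu(M_{2^m}),\nu(M_{2^n}))=\operatorname{Var}\nu(M_{2^m})+\cov(\nu(M_{2^m}),g(A))$, which isolates the entire dependence into the scalar age $A$, and then bound $\operatorname{Var}g(A)$ by subadditivity of the renewal function, the Lorden-type overshoot inequality $\PP(O_A>y)\le(1+U(M_{2^m}))\PP(X>y)$, the observation $\PP(X>M_{2^j})\le C2^{-j}$ (via $\max_iX_i\le S_{2^j}$), and the geometric growth of the medians; I checked the three-region splitting of $\E\min(U(O_A)+1,1+U(\Delta))^2$ and it closes, giving the cleaner and slightly stronger rate $C2^{-|i-j|/2}$ with no logarithmic factors. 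Your route requires more renewal-theoretic input but makes the ``conditional expectation of the future is nearly constant'' phenomenon explicit, whereas the paper's is softer, needing only the tail bound and domination. A further merit of your write-up: by tracking $q_n=\PP(S_n\le M_n)\to\tfrac12$ and working with $q_{2^m}\le\tfrac23$ for $m\ge m_0$, you quietly repair the paper's overstated inequality $\PP(Y_n>\lambda)\le 2^{-\lfloor\lambda\rfloor}$, which under the paper's own definition of $M_n$ should read $q_n^{\lfloor\lambda\rfloor}$ with $q_n>\tfrac12$; your weakened version is all that is needed anywhere.
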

\begin{proof}
By definition, $\Med Y_{n}=1$. It is easy to conclude from that that
$Y_{n}$ form a precompact (tight) family of variables. Indeed,
\[
Y_{n}>\lambda\iff\max\{i:S_{i}\le M_{n}\}>n\lambda\iff S_{\left\lfloor n\lambda\right\rfloor +1}\le M_{n}\;.
\]
Since $S$ is an increasing random walk, to be smaller than $M_n$ at time $n\lambda$ its increments must be smaller than $M_n$ on any block of variables between $0$ and $n\lambda$, and disjoint blocks are independent. So we get
\begin{equation}
\mathbb{P}(Y_{n}>\lambda)=
\mathbb{P}(S_{\lfloor n\lambda\rfloor+1}\le M_n)\le
\mathbb{P}(X_{(k-1)n+1}+\dotsb+X_{kn}\le M_{n}\forall k\le\lfloor\lambda\rfloor)\le
2^{-\left\lfloor \lambda\right\rfloor }.\label{eq:Ydecaysepx}
\end{equation}
We will use the second moment method so we need to estimate 
\[
\mathbb{E}(Y_{n}Y_{m})-\mathbb{E}(Y_{n})\mathbb{E}(Y_{m})
\]
say, for $m<n$ (both will be powers of two but let us not record
this fact in the notation). Let therefore $\lambda=\left\lfloor (n/m)^{1/3}\right\rfloor $.
Define the event $\mathcal{B}$ (the ``bad'' event) to be the event
that one of the following happened:
\begin{enumerate}
\item $Y_{m}\ge\lambda$.
\item $S_{mY_{m}+1}>M_{m\lambda^{2}}$ (note the $+1$ in the index ---
we are taking here the first time $S_{i}$ raises above $M_{m}$).
\end{enumerate}
The probability of the first clause is estimated by
(\ref{eq:Ydecaysepx}) to be $\le 2^{-\lambda}$,
so let us estimate the probability of the second minus the first, i.e.\ the probability that $Y_m<\lambda$ but $S_{mY_m+1}>M_{m\lambda^2}$. Because $S$ is increasing, if $mY_m<m\lambda$ then $mY_m+1\le m\lambda$ and then $S_{mY_m+1}\le S_{m\lambda}$ so we can write
\[
\mathbb{P}(Y_{m}<\lambda,S_{mY_{m}+1}>M_{m\lambda^{2}})\le\PP(S_{m\lambda}>M_{m\lambda^2})=:p.
\]
But then we can write
\begin{align*}
\frac{1}{2} & <\mathbb{P}(S_{m\lambda^{2}}\le M_{m\lambda^{2}})\\
 & \le\mathbb{P}\left(X_{(k-1)m\lambda+1}+\dotsb+X_{km\lambda}\le M_{m\lambda^{2}}\quad\forall k\le\lambda\right)\\
 & \le(1-p)^{\lambda}
\end{align*}
so $p\le C/\lambda$. Totally we get
\[
\mathbb{P}(\mathcal{B})\le C/\lambda.
\]
With the estimate (\ref{eq:Ydecaysepx}) this gives
\begin{equation}
\mathbb{E}(Y_{m}\mathbf{1}_{\mathcal{B}}) \le
\sum_{k=1}^{\infty}\mathbb{P}(\mathcal{B}\cap\{k-1<Y_{m}\le k\})\cdot k\le
\sum_{k=1}^\infty Ck\min\Big\{\frac1\lambda,2^{-k}\Big\}\le
\frac{C(\log\lambda)^2}{\lambda}.\label{eq:bad_who_cares}
\end{equation}
We need a similar estimate for $Y_nY_m\mathbf{1}_{\mathcal{B}}$ and for this we need to estimate $\mathbb{E}(Y_n\,|\,\mathcal{B}\cap\{k-1<Y_m\le k\}$. We write $nY_n=mY_m+1+Z$ and note that $Z$ is the number of steps our random walk needed to get from $S_{mY_m+1}$ to $n$ so it is stochastically dominated by $nY_n$, even after conditioning over $\mathcal{B}\cap\{k-1<Y_m\le k\}$ (which is an event that looks at the random walk only up to $mY_m+1$). So
\[
\mathbb{E}(Y_n|\mathcal{B}\cap\{Y_m=y\})\le y\frac{m}{n}+C
\]
which we use to show
\begin{align}
\mathbb{E}(Y_{n}Y_{m}\mathbf{1}_{\mathcal{B}}) & \le
\sum_{k=1}^{\infty}\mathbb{P}(\mathcal{B}\cap\{k-1<Y_{m}\le k\})\cdot k\cdot(k(m/n)+C)\nonumber \\
 & \le \sum_{k=1}^\infty Ck^2 \min\Big\{\frac1\lambda,2^{-k}\Big\}
\le\frac{C(\log\lambda)^3}{\lambda}\;.\label{eq:bad_YmYn}
\end{align}
This finishes our treatment of the event $\mathcal{B}$.

We now restrict our attention to $\neg\mathcal{B}$. Let therefore
$\omega$ be some atom of the $\sigma$-field spanned by $Y_{m},X_{1},\dotsc,X_{mY_{m}+1}$
such that $\omega\not\subset\mathcal{B}$, and write
\begin{equation}
\mathbb{E}(Y_{n}|\omega)=Y_{m}\cdot\frac{m}{n}\\
+\frac{1}{n}\mathbb{E}(\max\{i:X_{mY_{m}+2}+\dotsb+X_{mY_{m}+1+i}\le M_{n}-(S_{mY_{m}+1})\}\,|\,\omega)\label{eq:Yn|ome}
\end{equation}
The first term we bound by $C/\lambda^2$ (because of the first clause
in the definition of $\mathcal{B}$). For the second, we note that
$X_{2^{m}Y_{m}+2},\dotsc$ has the same distribution as $X_{1},\dotsc$
(again, conditioning over $\omega$ does not change this fact) so this
term is bounded above by $\mathbb{E}Y_{n}$ and bounded below by 
\[
\frac{1}{n}\mathbb{E}(\max\{i:S_{i}\le M_{n}-M_{m\lambda^{2}}\})
\]
by the second clause in the definition of $\mathcal{B}$. Hence we
need to estimate the variable
\[
\max\{i:S_{i}\le M_{n}\}-\max\{i:S_{i}\le M_{n}-M_{m\lambda^{2}}\}=|\{i:M_{n}-M_{m\lambda^{2}}<S_{i}\le M_{n}\}|
\]
But this variable is stochastically dominated simply by $m\lambda^{2}Y_{m\lambda^{2}}$
because it is the number of steps our random walk needs to traverse an interval $\le M_{m\lambda^{2}}$.
Combining both parts of (\ref{eq:Yn|ome}) gives 
\[
\mathbb{E}(Y_{n})-C/\lambda\le\mathbb{E}(Y_{n}|\omega)\le C/\lambda+\mathbb{E}(Y_{n})
\]
which we multiply by $Y_{m}$ and integrate over $\neg\mathcal{B}$
to get
\[
|\mathbb{E}(Y_{m}Y_{n}\mathbf{1}_{\neg\mathcal{B}})-\mathbb{E}(Y_{n})\mathbb{E}(Y_{m}\mathbf{1}_{\neg\mathcal{B}})|\le\frac{C}{\lambda}\mathbb{E}(Y_{m}\mathbf{1}_{\neg\mathcal{B}})\le\frac{C}{\lambda}.
\]
With (\ref{eq:bad_who_cares}), (\ref{eq:bad_YmYn}) we get
\begin{equation}
|\mathbb{E}(Y_{m}Y_{n})-\mathbb{E}(Y_{m})\mathbb{E}(Y_{n})|\le\frac{C(\log\lambda)^3}{\lambda}.\label{eq:covariance}
\end{equation}
This finishes the lemma: define 
\[
A_{N}=\sum_{i=1}^{N}Y_{2^{i}}
\]
and estimate $\mathbb{V}A_{n}$. We get
\begin{align*}
\mathbb{V}A_{N} & =
\sum_{i=1}^{N}\mathbb{V}Y_{2^{i}}+2\sum_{1\le i<j\le N}\cov(Y_{2^{i}},Y_{2^{j}})\\
 & \le\sum_{i=1}^{N}C+2\sum_{1\le i<j\le N}C\cdot2^{(i-j)/3}|i-j|^3\le CN
\end{align*}
where the bound for $\mathbb{V}Y_{2^{i}}$ comes from the exponential
decay (\ref{eq:Ydecaysepx}), and the bound for the covariances is
exactly (\ref{eq:covariance}), recall that $\lambda$ was defined
by $\left\lfloor (n/m)^{1/3}\right\rfloor $. On the other hand, $\Med Y_{2^{i}}=1$
so $\mathbb{E}Y_{2^{i}}\ge\frac{1}{2}$ and $\mathbb{E}A_{N}\ge\frac{1}{2}N$.
This gives that $A_{N}/\mathbb{E}A_{N}$ is concentrated. Using Markov's
inequality gives
\[
\mathbb{P}\left(\left|\frac{A_{N}}{\mathbb{E}A_{N}}-1\right|>\epsilon\right)\le
\mathbb{P}(|A_{N}-\mathbb{E}A_{N}|>c\epsilon N)\le
\frac{C}{\epsilon^2 N}.
\]
This means that these events happen only finitely many times on any
reasonable subsequence (e.g.\ $N^{2}$) and due to $\mathbb{E}Y_{2^{i}}\approx1$
and monotonicity of $A_{N}$ the convergence may be extended from a
subsequence to all $N$.\end{proof}

We are almost done, we just need to handle double returns to $o$,
for which we have the following simple lemma.
\begin{lemma}\label{lem:picoez}
Let $X_1$ and $X_2$ be two independent walkers on an infinite graph $G$, and
let $t>0$. Then
\[
\E(|\{s\le t:X_1(s)=X_2(s)=o\}|)\le C\Big(\E(|\{s\le
t:X_1(s)=o\}|)\Big)^{2/3}.
\]
\end{lemma}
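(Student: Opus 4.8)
The plan is to bound the double-return count by comparing it with the single-return count through the quantitative non-concentration estimate from \cite{GN}, the same tool used in Lemma~\ref{lem:one}. Write $R_i = |\{s \le t : X_i(s) = o\}|$ for $i=1,2$ and $D = |\{s \le t : X_1(s) = X_2(s) = o\}|$. The expectation $\E R_1 = \E R_2 =: G_t(o,o)$ is just the truncated Green's function, and we want $\E D \le C G_t(o,o)^{2/3}$. The first step is to split the time axis into the "early" part $[0,\tau]$ and the "late" part $(\tau,t]$ for a threshold $\tau$ to be optimized at the end. On the early part we use the crude bound $\E(|\{s \le \tau : X_1(s)=X_2(s)=o\}|) \le \E(|\{s \le \tau : X_1(s)=o\}|) \le G_\tau(o,o)$, which is at most $G_t(o,o)$ but more usefully is small when $\tau$ is small relative to the mixing scale — actually we only need the trivial bound $G_\tau(o,o)$ here, or even just $\tau$ itself in the worst case, so the real content is in the late part.

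For the late part, fix a time $s \in (\tau, t]$ and condition on the last visit of $X_1$ to $o$ strictly before $s$, say at time $r < s$. Since we are in the regime $s - r$ could be anything, but the point is that once $X_1$ is at $o$ at time $r$ and we ask for a return at time $s$, the gap $s-r$ is a first-return time. For $X_2$ the event $\{X_2(s) = o\}$ can be decomposed similarly over its last visit to $o$ before $s$. The key step is: for \emph{either} walker, conditioned on its being at $o$ at some time $u < s$ and not returning in $(u,s)$, the probability of a return at time $s$ is $\le C\log(s-u)/(s-u)$ by \cite{GN}. So at least one of the two walkers must execute a "long-range return": if the joint event $\{X_1(s)=X_2(s)=o\}$ holds and we look at whichever walker had its previous $o$-visit further back, that gap is at least (something like) $s/2$ if neither walker visited $o$ in $(s/2, s)$, but more carefully one sums over the possible previous-visit times. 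Summing the product of the two return probabilities over the pair of previous-visit times, and then over $s$, gives a bound of the form $\sum_{s \le t} (\text{stuff})$ that is dominated by $G_t(o,o) \cdot (\text{sup over } s \text{ of a } \log/\text{linear factor})$, which one then trades off against $\tau$.

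Concretely, I expect the bound to come out as $\E D \le \E(\text{early part}) + C \sum_{\tau < s \le t} \PP(X_1(s)=o)\cdot \sup_{u \le s-\tau}\frac{C\log s}{s-u}$ — wait, more honestly: $\E D \le G_\tau(o,o) + C\frac{\log t}{\tau}\sum_{s \le t}\PP(X_1(s)=o) = G_\tau(o,o) + C\frac{\log t}{\tau}G_t(o,o)$, using that once both walkers are at $o$ at time $s > \tau$, at least one had no $o$-visit in a window of length $\ge \tau/2$ just before $s$ (otherwise chase it back further), giving that walker a return probability $\le C\log s / \tau$, while the other walker's visit to $o$ at time $s$ contributes the factor $\PP(X_1(s)=o)$ after integrating out. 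Then choosing $\tau \approx G_t(o,o)^{1/3}$ (ignoring the logarithmic factor, which can be absorbed since $G_t(o,o) \to \infty$ and we have freedom in the constant $C$, or simply by replacing the exponent $2/3$ analysis with a slightly more careful optimization) balances $G_\tau(o,o) \le \tau = G_t(o,o)^{1/3} \le G_t(o,o)^{2/3}$ against $\frac{\log t}{\tau}G_t(o,o) \approx G_t(o,o)^{2/3}\log t$.

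The main obstacle I anticipate is making the phrase "at least one walker had no $o$-visit in a window of length $\ge \tau$ just before time $s$" precise without circular reasoning: if \emph{both} walkers visit $o$ very frequently near time $s$, this dichotomy fails pointwise, so one must instead integrate over the previous-visit times of both walkers and show that the contribution where both gaps are small is itself controlled by a sum that telescopes back into $\E D$ on a shorter time scale, or — cleaner — bound $\E D$ directly by $\sum_s \E(\mathbf{1}_{X_1(s)=o}\cdot \PP(X_2(s)=o \mid \mathcal{F}^{(2)}_{<s}))$ and observe that $\PP(X_2(s)=o\mid \mathcal{F}^{(2)}_{<s}) \le C\log(s-u)/(s-u)$ where $u$ is $X_2$'s last $o$-visit, then split the $s$-sum at the scale where $s - u$ first drops below $\tau$; handling the diagonal-ish terms where $X_2$'s returns cluster requires the same \cite{GN} estimate applied iteratively, exactly as the $B_i$ estimate in Lemma~\ref{lem:one} was summed, and I'd model the bookkeeping on that computation. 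The secondary nuisance is the $\log$ factor in the final exponent, but since every later use of this lemma (feeding into Lemma~\ref{lem:median}'s setup) only needs $\E D = o(G_t(o,o))$, a clean $2/3$ with a logarithmic correction, or even any exponent strictly below $1$, suffices, so I would not fight hard to remove it.
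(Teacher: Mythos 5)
There is a genuine gap here, and it stems from reaching for the wrong tool. The paper's proof is three lines: writing $M=\E(|\{s\le t:X_1(s)=o\}|)$ and using independence, the left-hand side equals $\sum_{s\le t}\PP(X_1(s)=o)^2$; one then replaces \emph{one} of the two factors by the universal on-diagonal decay $\PP(X(s)=o)\le C/\sqrt{s}$, valid on any infinite graph \cite{C99}. Splitting the sum at $s=M^{2/3}$, the early part is trivially at most $M^{2/3}$ and the late part is at most $CM^{-1/3}\sum_{s\le t}\PP(X_1(s)=o)=CM^{2/3}$. You have the right split of the time axis and the right trivial bound on the early part, but on the late part you try to control $\PP(X_2(s)=o)$ through the Gurel-Gurevich--Nachmias hazard-rate estimate via a last-visit decomposition. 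That estimate bounds $\PP(\text{first return at }n\mid\text{no return before }n)$; it says nothing pointwise about $\PP(X(s)=o)$ and cannot substitute for the heat-kernel bound. Moreover your central reduction --- ``at least one walker had no $o$-visit in a window of length $\ge\tau/2$ before $s$'' --- is, as you yourself note, false pointwise (both walkers may have visited $o$ at time $s-1$), and the proposed repairs (telescoping back into the double count at a shorter scale, iterating the estimate ``as in Lemma~\ref{lem:one}'') are not carried out and it is not clear they close.

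Even granting the optimistic outcome you write down, the bound $G_\tau(o,o)+C\frac{\log t}{\tau}G_t(o,o)$ proves neither the lemma nor --- contrary to your closing remark --- the weaker statement needed for its application. The factor is $\log t$, not $\log G_t(o,o)$, and on a general recurrent graph $t$ is not controlled by $G_t(o,o)$. Take $G=\Z^2$: there $G_t(o,o)\asymp\log t$, so after optimizing over $\tau$ your bound is $\asymp G_t(o,o)^{2/3}\log t\asymp G_t(o,o)^{5/3}$, which is worse than the trivial bound by the single-walker count. In the application, $t=M_n$ is the median of the $n^{\textrm{th}}$ return time, which on $\Z^2$ is exponential in $n$ while $G_{M_n}(o,o)\asymp n$; your bound then gives order $n^{5/3}$ where the argument needs $o(n)$. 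The missing idea is precisely the ingredient your proposal never invokes: the universal bound $\PP(X(s)=o)\le C/\sqrt{s}$ on infinite graphs.
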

\begin{proof}
Denote $M=\E(|\{s\le t:X_1(s)=o\}|)$. On any
infinite graph, $\PP(X_2(s)=o)\le C/\sqrt{s}$ (see e.g.\ \cite{C99}). Hence we write
\begin{align*}
\E(|\{M^{2/3}&\le s\le t:X_1(s)=X_2(s)=o\}|)=\sum_{s=M^{2/3}}^t
\PP(X_1(s)=o)^2\\
&\le \sum_{s=M^{2/3}}^t\PP(X_1(s)=o)\cdot\frac{C}{\sqrt{s}}\le 
CM^{-1/3}\sum_{s=1}^t \mathbb{P}(X_1(s)=o)=
CM^{2/3}.
\end{align*}
Since the number of visits up to time $M^{2/3}$ is definitely bounded
by $M^{2/3}$, we are done.
\end{proof}
The theorem now follows easily. By lemma \ref{lem:one} we may
calculate the median $M_n$ of the $n^{\textrm{th}}$ return of a single
walker to $o$. Defining
\[
Y_n^i=\frac 1n |\{\textrm{visits of walker $i$ until $M_n$}\}|\qquad Y_n=\sum_i Y_n^i
\]
we can use lemma \ref{lem:median} (the random walk $S$ in lemma \ref{lem:median} is defined by $S_n$ being the time of the $n^\textrm{th}$ visit of the walker to $o$ and then the $M_n$ of lemma \ref{lem:median} are the same as here, and the $Y_n$ of lemma \ref{lem:median} are the $Y_n^i$ here). We get
\[
\frac 1N\sum_{n=1}^N\frac{Y_{2^n}}{\E Y_{2^n}^i}\to k\;.
\]
We cannot measure $Y_n$ directly, since if two
walkers returned to $o$ at the same time, they contribute $2$ to the sum
but we cannot see that. Nevertheless, if we define 
\[
\widetilde{Y}_n=\frac 1n |\{1\le t \le M_n: \exists j, X_j(t)=o\}|
\]
then $\widetilde{Y}_n$ can be measured, and
\[
|Y_n-\widetilde{Y}_n|\le \frac 1n \sum_{i,j}|\{1\le t\le M_n:
X_i(t)=X_j(t)=o\}|
\]
and each term is bounded by lemma \ref{lem:picoez} by $(\E (nY_n))^{2/3}
/n$. Since $\E Y_n\le C$ this gives that 
\[
|Y_n-\widetilde{Y}_n|\le Ck^2n^{-1/3}.
\]
We get
\[
\frac 1N\sum_{n=1}^N\frac{\widetilde{Y}_{2^n}}{\E Y_{2^n}^i}\to k
\]
and the theorem is proved.\qed
%



 \subsection*{Acknowledgements}
Both authors supported by their respective Israel Science Foundation grants.


\bigskip
\begin{flushright}
\footnotesize\obeylines
   \textsc{Weizmann Institute}
  \textsc{Rehovot, Israel}
  \textsc{E-mail:} \texttt{itai.benjamini@weizmann.ac.il ; gady.kozma@weizmann.ac.il}
\end{flushright}

\end{document}